\title{\LARGE \bf
Sparse Linear-Quadratic Feedback Design Using Affine Approximation
}
\author{MirSaleh Bahavarnia $^{1}$
\thanks{$^{1}$The Author is with the Department of Mechanical Engineering and Mechanics, Lehigh University,
        Bethlehem, PA 18015, USA
        {\tt\small mib313@lehigh.edu}}%
}
\begin{document}

\newtheorem{theorem}{Theorem}
\newtheorem{lemma}{Lemma}
\newtheorem{problem}{Problem}
\newtheorem{example}{Example}
\newtheorem{proposition}{Proposition}
\newtheorem{corollary}{Corollary}
\newtheorem{conjecture}{Conjecture}
\newtheorem{remark}{Remark}

\maketitle
\thispagestyle{empty}
\pagestyle{empty}

\begin{abstract}

We consider a class of $\ell_0$-regularized linear-quadratic (LQ) optimal control problems. This class of problems is obtained by augmenting  a penalizing sparsity measure to the cost objective of the standard linear-quadratic regulator (LQR) problem in order to promote sparsity pattern of the state feedback controller. This class of problems is generally NP hard and computationally intractable. First, we apply a $\ell_1$-relaxation and consider the $\ell_1$-regularized LQ version of this class of problems, which is still nonconvex. Then, we convexify the resulting $\ell_1$-regularized LQ problem by applying affine approximation techniques. An iterative algorithm is proposed to solve the $\ell_1$-regularized LQ problem using a series of convexified $\ell_1$-regularized LQ problems. By means of several numerical experiments, we show that our proposed algorithm is comparable to the existing algorithms in the literature, and in some cases it even returns solutions with superior performance and sparsity pattern.
\end{abstract}

\allowdisplaybreaks

\section{INTRODUCTION}\label{Intro}

The area of distributed control systems has been growing rapidly in the past decade and it has been applied to various real-world problems such as formation control of autonomous vehicles, power networks, transportation networks, mobile wireless networks, only to name a few. In several important applications, the centralized control methodologies cannot be applied due to the lack of access to global information in subsystem level throughout the network.  This design constraint has been motivated researchers to consider the possibility of designing near-optimal sparse feedback controllers for large-scale dynamical networks \cite{lin2012sparse,lin2013design}.

In this paper, we consider a class of $\ell_0$-regularized linear-quadratic (LQ) optimal control problems. This class of problems can be formulated by considering the standard linear-quadratic regulator (LQR) problem and augmenting it with an additional term in the cost objective in order to promote sparsity pattern of the state feedback controller. This class of  $\ell_0$-regularized LQ problems is generally nonconvex and computationally intractable. This is basically due to the presence of $\ell_0$-measure in the cost objective and nonlinear terms in the Lyapunov equation that corresponds to the closed-loop stability of the system. The nonconvex sparsity-promoting term in the cost objective can be convexified by replacing it with $\ell_1$-norm \cite{candes2006near} in order to relax the problem as a $\ell_1$-regularized LQ problem. This is still a nonconvex problem. Our main contributions in this paper are twofold. First, we convexify the $\ell_1$-regularized LQ problem by applying affine approximation techniques to convexify the nonlinear terms in the corresponding Lyapunov equation. Second, we propose an iterative algorithm to solve the $\ell_1$-regularized LQ problem using a series of convexified $\ell_1$-regularized LQ problems. In the literature review, we discuss that our results are close in spirit to \cite{fardad2014design}. In the numerical experiments section, we analyze the performance of our proposed algorithm by means of several simulations and compare our results to the algorithm proposed in \cite{lin2013design}. Our simulation results reveal that our proposed algorithm is comparable to the existing algorithms in the literature, and in some cases it even returns solutions with superior performance and sparsity pattern.



Some works have been done in order to tract problems similar to sparsity-promoting LQR problem. One of the main ideas has been the change of $\ell_0$-norm with convex $\ell_1$-norm and then solving the penalized problem with ADMM method \cite{lin2012sparse,lin2013design}. Another method which has been used to get some bounds on the optimal value of such sparsity-constrained problems works based on projection and gives some useful intuition about optimal value of the corresponding sparsity-constrained problem \cite{martensson2012scalable}. All the methods mentioned so far (except the \cite{fardad2014design}), have been proposed for the continuous-time sparsity-promoting state feedback gain controller. In addition to such continuous-time state feedback gain controllers, some methods have been presented by considering the discrete-time standard LQR problem which some of them propose sparsity-promoting state feedback gain controllers \cite{chizeck1986discrete}, \cite{lavaei2013optimal}, \cite{fardad2014design}, \cite{fazelnia2014convex},\cite{wang2014localized} and \cite{wang2014sparse}. In one of these recent methods, decentralized state feedback gain controller is presented by using some convex relaxations where some graph theoretic proofs are provided to determine the upper bound for rank of such a relaxed SDP solution and if such a rank is equal to 1, then the globally optimal solution can be reconstructed from the relaxed SDP solution \cite{lavaei2013optimal} and \cite{fazelnia2014convex}. However, the solution obtained by such a relaxation-based method is decentralized and is not presented for general sparse controllers. But, it is spanning both finite and infinite horizon discrete-time sparsity-promoting LQR problems \cite{lavaei2013optimal} and \cite{fazelnia2014convex}.


Recently, a method has been revealed which yields sub-optimal structured and sparse feedback gains on the basis of iterative convex programming \cite{fardad2014design}. In such a method, the objective function of optimization problem has an $\mathcal{H}_2$-norm form. Then, the non-convexity of problem is extracted as an inversion of an optimization variable. After that, the equality condition having such a non-convexity is replaced by two complementary inequalities. One of them is characterized by Schur complement and the other one is rewritten as an equality having some penalized term \cite{zhang2006schur}. It should be noted that the non-convex part appeared on such an equality is linearized around an optimal estimate. The achieved results are really notable in comparison with the simplicity of the used iterative convex programming. However, no convergence proof is presented \cite{fardad2014design}. In \cite{wang2014sparse} as well as \cite{fardad2014design}, convex optimization has been used as an effective tool to design a sub-optimal sparse static output feedback controller.

One of the well-known systems studied in Control Theory is spatially decaying systems which has been discussed in recent years \cite{motee2008optimal,motee2013measuring,motee2014sparsity, moteeACC2014}. In these recent works, a large class of spatially decaying systems is classified where their quadratically-optimal feedback controllers inherit spatial decay property from the dynamics of the underlying system. Moreover, they propose a method based on new notions of $q$-Banach algebras where sparsity and spatial localization features of spatially decaying systems can be studied when $q$ is chosen sufficiently small.  In this paper, in first subsection of numerical examples, we employ such a class of systems for verifying our proposed design method. The results are considerably consistent with our expectations. The state feedback gain controllers have patterns very  similar to the structure of the underlying spatially decaying systems with reasonable sparsification.

Another class of spatially distributed systems is cyclic systems which arises in biochemical reactions. Such a class of systems have been investigated mathematically in \cite{tyson1978dynamics,thron1991secant,jovanovic2007remarks,jovanovic2008passivity,siami2015scaling,siami2013robustness}. The second subsection of numerical examples is devoted to considering such well-known systems and their sub-optimal sparse controllers design. Also, we state how the sub-optimal sparsest solution for such a special case can be derived analytically. The interesting fact about such a sub-optimal sparsest solution is that it just has $1$ nonzero term.

This paper is organized as follows: Section \ref{Intro} is an Introduction to the Distributed Optimal Sparse Controllers, our main contributions and diverse methods proposed to design such controllers. Section \ref{ProFor} is devoted to formulate the $\ell_0$-regularized LQR design problem. In Section \ref{sdpres} we see the main part of the paper which shows how $\ell_1$-norm, affine approximation and Schur complement help us to cast the $\ell_1$-regularized LQR problem as a simple convex SDP form which is solved with CVX toolbox \cite{petersen2008matrix}. Section \ref{IterAlg} contains the proposed algorithm which finds the sub-optimal sparse state feedback gain controller. Section \ref{NumExp} gives some numerical experiments of spatially decaying systems and cyclic systems which show how useful the new Affine Approximation-based formulation presented in Section \ref{sdpres} is. Finally, Section \ref{Discon} concludes the paper together with mentioning some future work at the end of it.
\section{Problem Formulation} \label{ProFor}
We consider the class of linear time-invariant  systems  
\begin{equation}
\dot{x}= Ax+Bu~~~~\textrm{with}~~~~x(0)=x_0, 
\end{equation}
where $A \in \mathbb{R}^{n \times n}$ and $B \in \mathbb{R}^{n \times m}$. It is assumed that the pair $(A,B)$ is controllable and the initial condition $x_0$ is drawn from a standard normal distribution (i.e., with zero mean and unit standard deviation). Let us define the following $\ell_0$-regularized Linear-Quadratic (LQ) optimal control problem:\\
\rule{8.65cm}{0.8pt}\\
{\bf $\bf \ell_0$-Regularized LQ  ($\bf \ell_0$-RLQ) Problem:}
\begin{eqnarray*}
& & \hspace{-1.73cm} \underset{x,u,K}{\textrm{Minimize}}~\mathbb{E}\left\{ \int_{0}^{\infty} \big( x^T Q x + u^T R u \big) dt \right\} + \alpha_1 \| K \|_{\ell_0} \\
& & \hspace{-1.7cm} \textrm{subject to:    }\\
& & \hspace{-0.1cm} \dot{x}=Ax+Bu, 
\\
& & \hspace{-0.1cm} u=Kx, \\ 
& & \hspace{-0.1cm} x(0)=x_0, \\ 
& & \hspace{-0.1cm} K ~\textrm{: stabilizing.} 
\end{eqnarray*}
\rule{8.65cm}{0.8pt}\\
In this formulation, $K \in \mathbb{R}^{m \times n}$ is the state feedback gain matrix, $R \succ 0$ is the control weight matrix, $Q \succeq 0$ is the state weight matrix, and $\|K\|_{\ell_0}$ is the number of nonzero elements of matrix $K$ \cite{horn2012matrix}. The $\ell_0$-RLQ problem  can be viewed as a penalized standard LQR problem, where the penalty term $\|K\|_{\ell_0}$ is augmented to improve sparsity pattern of the state feedback gain $K$.

By incorporating constraints of $\ell_0$-RLQ problem, we can cast the objective functional in this problem in a simpler form using the following standard procedure
\begin{eqnarray}
& & \hspace{-.8cm} \int_{0}^{\infty} \big( x^T Q x + u^T R u \big) dt  = \int_{0}^{\infty} \mathbf{Tr} \big( x^T(Q + K^T R K)x \big) dt. \nonumber\\
& & \hspace{-.4cm} = ~\int_{0}^{\infty} \mathbf{Tr} \big( x_0^T e^{(A+BK)^Tt}(Q + K^T R K)e^{(A+BK)t}x_0 \big) dt  \nonumber \\
& & \hspace{-.4cm} = ~\mathbf{Tr}\big(X_0X \big), \label{cost-1}
\end{eqnarray}
where $X_0=x_0x_0^T$ and  
\begin{equation*}
X=\int_0^\infty e^{(A+BK)^Tt}(Q+K^TRK)e^{(A+BK)t}dt.
\end{equation*}
In order to satisfy the last constraint in $\ell_0$-RLQ problem, we apply the standard Lyapunov theorem and conclude that $K$ is stabilizing if and only if $X$ is the unique positive definite solution of  the following Lyapunov equation \cite{dullerud2000course}
\begin{equation*}
(A+BK)^TX+X(A+BK)+Q+K^TRK = 0.
\end{equation*}
In the final step, by taking expectation from the equivalent cost \eqref{cost-1} it follows that $\mathbb{E} \{ \mathbf{Tr}(X_0 X) \}= \mathbf{Tr}(X)$. This can be done as trace is a linear operator and the  expectation operator distributes over it. As a result of our simplifications, $\ell_0$-RLQ problem can be rewritten in the following compact form:
\rule{8.65cm}{0.8pt}\\
{\bf  Equivalent Form of the $\bf \ell_0$-RLQ Problem:}
\begin{eqnarray}
& & \hspace{-1cm} \underset{X,K}{\textrm{Minimize}} ~\mathbf{Tr}(X) +\alpha_1 \|K\|_{\ell_0} \label{obj-1}\\
& & \hspace{-1cm} \textrm{subject to:  } \nonumber \\
& & \hspace{-1cm} (A+BK)^TX+X(A+BK)+Q+K^TRK = 0, \label{const-1}\\
& & \hspace{-1cm} X \succ 0 \label{const-2}. 
\end{eqnarray}
\rule{8.65cm}{0.8pt}\\
The equivalent form of the $\ell_0$-RLQ problem is in general an NP-hard problem and computationally intractable. This is due to the $\ell_0$-sparsity measure in the cost function \eqref{obj-1}. The $\ell_0$-RLQ problem can be relaxed by replacing  the $\ell_0$-sparsity measure with its best convex approximation, i.e.,  $\ell_1$-norm \cite{candes2006near} in order to get the following relaxed optimal control problem:
\\\rule{8.65cm}{0.8pt}\\
{\bf $\bf \ell_1$-Regularized LQ  ($\bf \ell_1$-RLQ) Problem:}
\begin{eqnarray}
& & \hspace{-1cm} \underset{X,K}{\textrm{Minimize}} ~\mathbf{Tr}(X) +\alpha_1 \|K\|_{\ell_1} \label{obj-2}\\
& & \hspace{-1cm} \textrm{subject to:  } \nonumber \\
& & \hspace{-1cm} (A+BK)^TX+X(A+BK)+Q+K^TRK = 0, \label{const-3}\\
& & \hspace{-1cm} X \succ 0 \label{const-4}. 
\end{eqnarray}
\rule{8.65cm}{0.8pt}\\
The $\ell_1$-RLQ problem is still nonconvex due to nonlinear constraint \eqref{const-3}. This constraint has differentiable algebraic forms. In the following section, we employ an affine approximation method to convexify the $\ell_1$-RLQ problem. Then, we use our results to propose an iterative algorithm to compute sub-optimal solutions for the original $\ell_0$-RLQ problem. 

\begin{remark}
Alternative forms of $\ell_1$-RLQ problem have been considered before in \cite{lin2013design}, where the authors employ an ADMM-based approach to handle nonlinear constraint \eqref{const-3}. In our numerical experiments in Section \ref{NumExp}, we will repeatedly compare the performance of our proposed algorithm to that of  \cite{lin2013design}. 
\end{remark}

\section{SDP-Restriction of the RE$\bf \ell_1$-RLQ Problem}\label{sdpres}

In this section, we derive a SDP-restriction of a relaxed form of
$\ell_1$-RLQ problem. In the first step using the following lemma, we rewrite $\ell_1$-RLQ problem in a new equivalent form where the source of nonlinearity in $\ell_1$-RLQ problem is extracted and isolated as a new constraint, i.e., inequality \eqref{nonlinear-1}. Then, we will show how to utilize affine approximations to estimate that nonlinear constraint and come up with a SDP-restriction of a relaxed form of $\ell_1$-RLQ problem.  Let us denote $\mathbb{S}^{n}_{++}$ and $\mathbb{S}^{n}_{+}$ to be the positive definite cone and positive semi-definite cone of $n \times n$ matrices, respectively.

\begin{lemma} \label{prop1}
The $\ell_1$-RLQ problem is equivalent to the following optimization problem:\\
\rule{8.65cm}{0.8pt}
{\bf Equivalent Form of the $\bf \ell_1$-RLQ Problem (E$\bf \ell_1$-RLQ):}
\begin{eqnarray}
& & \hspace{-1.2cm} \underset{X,K,F,P,Y}{\textrm{Minimize}} ~ \mathbf{Tr}(X)+\alpha_1 \|K\|_{\ell_1} \nonumber \\
& & \hspace{-1.2cm} \textrm{subject to:  } \nonumber \\
& & \hspace{-0.2cm} \begin{bmatrix} -Q-F+Y & 2X-P^T & P^T\\ 2X-P & I & 0\\P & 0 & \frac{1}{\delta} I\end{bmatrix} \succeq 0, \nonumber\\
& & \hspace{-0.2cm} P = X-\frac{A+BK}{2}, \nonumber\\
& & \hspace{-0.2cm} X \succeq \epsilon_1I, \nonumber\\
& & \hspace{-0.2cm} \begin{bmatrix} Y & P^T \\P & \frac{1}{1+\delta}I\end{bmatrix} \succeq 0, \nonumber\\
& & \hspace{-0.2cm} \begin{bmatrix} F & K^T\\ K & R^{-1}\end{bmatrix} \succeq 0, \nonumber\\
& & \hspace{-0.2cm} Y \preceq (1+\delta)P^TP, \label{nonlinear-1}
\end{eqnarray}
\rule{8.65cm}{0.8pt}
where $X \in \mathbb{S}^{n}_{++}$, $Y \in \mathbb{S}^{n}_{+}$, $F \in \mathbb{S}^{n}_{+}$, $K \in \mathbb{R}^{m \times n}$ and $P \in \mathbb{R}^{n \times n}$. 
\end{lemma}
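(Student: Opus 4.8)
The plan is to prove the equivalence by showing that, for any fixed pair $(X,K)$, auxiliary variables $F,P,Y$ completing a feasible point of the E$\ell_1$-RLQ problem exist if and only if $(X,K)$ satisfies a Lyapunov \emph{inequality}, and then to upgrade that inequality to the Lyapunov equality \eqref{const-3} of the $\ell_1$-RLQ problem via a monotonicity argument. Since the objective $\mathbf{Tr}(X)+\alpha_1\|K\|_{\ell_1}$ depends only on $(X,K)$, the variables $F,P,Y$ are purely auxiliary, so it suffices to show that both feasible sets project onto the same set of pairs $(X,K)$ and attain the same optimal value.

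First I would dispose of the two smaller linear matrix inequalities by Schur complements. Since $R^{-1}\succ 0$, the block constraint on $(F,K)$ is equivalent to $F\succeq K^TRK$, so $F$ serves as an upper bound for $K^TRK$. Since $\frac{1}{1+\delta}I\succ 0$, the block constraint on $(Y,P)$ is equivalent to $Y\succeq(1+\delta)P^TP$; combined with the constraint \eqref{nonlinear-1}, namely $Y\preceq(1+\delta)P^TP$, this pins the variable to the exact value $Y=(1+\delta)P^TP$. This is the key observation: the lone nonconvex constraint, together with its convex LMI partner, forces an equality that later produces an exact cancellation.

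Next I would take the Schur complement of the $3\times 3$ block LMI with respect to its lower-right block $\mathrm{diag}(I,\tfrac{1}{\delta}I)\succ 0$, reducing it to
\begin{equation*}
-Q-F+Y-(2X-P^T)(2X-P)-\delta P^TP\succeq 0.
\end{equation*}
Substituting $Y=(1+\delta)P^TP$ and then $P=X-\tfrac{1}{2}(A+BK)$ and expanding, the quadratic terms $4X^2$ and all $P^TP$ terms cancel, leaving exactly
\begin{equation*}
(A+BK)^TX+X(A+BK)+Q+F\preceq 0.
\end{equation*}
Combining with $F\succeq K^TRK$ yields the Lyapunov inequality $(A+BK)^TX+X(A+BK)+Q+K^TRK\preceq 0$. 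Conversely, given $(X,K)$ satisfying this inequality, one recovers a feasible point of E$\ell_1$-RLQ by choosing $P$ and $Y$ as above and picking any $F$ with $K^TRK\preceq F\preceq -[(A+BK)^TX+X(A+BK)+Q]$, a set that is nonempty precisely because the Lyapunov inequality holds.

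The final step is to pass from the Lyapunov inequality back to the equality \eqref{const-3}. Here I would invoke the standard comparison result: under the standing controllability assumption and the usual detectability condition, the strict bound $X\succ 0$ certifies that $A+BK$ is Hurwitz, and if $\tilde X$ denotes the unique solution of the Lyapunov equality for that $K$, then monotonicity gives $\tilde X\preceq X$, hence $\mathbf{Tr}(\tilde X)\le\mathbf{Tr}(X)$. Because we minimize $\mathbf{Tr}(X)$, the optimum is always attained with equality, so the two problems share the same optimal value and optimizers. I expect the main obstacle to be the bookkeeping in the Schur-complement reduction of the $3\times 3$ LMI and the verification of the precise cancellation of the $4X^2$ and $(1+\delta)P^TP$ terms; a secondary technical point is the stability argument needed to ensure the Lyapunov inequality implies $K$ is stabilizing, together with the harmless replacement of $X\succ 0$ by $X\succeq\epsilon_1 I$.
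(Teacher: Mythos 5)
Your proposal is correct and takes essentially the same route as the paper's own proof: the appendix chains together exactly the ingredients you use --- the Schur complements on the $(F,K)$ and $(Y,P)$ blocks, the pinning $Y=(1+\delta)P^TP$ by the pair of complementary inequalities, the difference-of-squares cancellation with $P=X-\tfrac{1}{2}(A+BK)$ that collapses the $3\times 3$ LMI to the Lyapunov inequality $(A+BK)^TX+X(A+BK)+Q+K^TRK\preceq 0$, and the equality-versus-inequality monotonicity step --- only written in the forward direction (building up from $\ell_1$-RLQ through a sequence of reformulations) rather than your reverse, projection-style direction. If anything, you are more explicit than the paper on the one delicate point (that the Lyapunov inequality with $X\succ 0$ certifies $A+BK$ Hurwitz needs a detectability-type condition, and that minimizing $\mathbf{Tr}(X)$ recovers the equality), which the paper handles by citing prior work for the step from the equality-constrained to the inequality-constrained problem.
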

\begin{proof}
See Appendix.
\end{proof}

In the following proposition, we take an important step toward dealing with the nonconvex term \eqref{nonlinear-1} in E$\ell_1$-RLQ problem.
\begin{proposition}
The E$\ell_1$-RLQ problem can be relaxed as:
\rule{8.65cm}{0.8pt}
{\bf Relaxed  E$\bf \ell_1$-RLQ Problem (RE$\bf \ell_1$-RLQ):} 
\begin{eqnarray}
& & \hspace{-1.2cm} \underset{X,K,F,P,Y}{\textrm{Minimize}}~ \mathbf{Tr}(X)+\alpha_1 \|K\|_{\ell_1} \nonumber \\
& & \hspace{-1.2cm} \textrm{subject to:  } \nonumber \\
& & \hspace{-0.2cm} \begin{bmatrix} -Q-F+Y & 2X-P^T & P^T\\ 2X-P & I & 0\\P & 0 & \frac{1}{\delta} I\end{bmatrix} \succeq 0, \nonumber \\
& & \hspace{-0.2cm} P = X-\frac{A+BK}{2}, \nonumber \\
& & \hspace{-0.2cm} X \succeq \epsilon_1I, \nonumber \\
& & \hspace{-0.2cm} \begin{bmatrix} Y & P^T\\P & \frac{1}{1+\delta}I\end{bmatrix} \succeq 0, \nonumber \\
& & \hspace{-0.2cm} \begin{bmatrix} F & K^T\\ K & R^{-1}\end{bmatrix} \succeq 0, \nonumber \\
& & \hspace{-0.2cm} \|Y-(1+\delta)P^TP\|_{*} \le \epsilon, \label{ineq-1}
\end{eqnarray}
\rule{8.65cm}{0.8pt}
where $\|.\|_{*}$ denotes the nuclear norm of $(.)$ which is defined as sum of singular values of $(.)$. And also for a given matrix $U$, we have $\|U\|_{*}=\mathbf{Tr}(\sqrt{U^TU})$ where $\sqrt{U^TU}$ is a matrix for which we have $U^TU=(\sqrt{U^TU})^2$.
\end{proposition}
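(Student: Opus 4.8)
The plan is to establish that the feasible region of the E$\ell_1$-RLQ problem is contained in that of the RE$\ell_1$-RLQ problem; since the two problems share the same objective $\mathbf{Tr}(X)+\alpha_1\|K\|_{\ell_1}$, this containment immediately yields that RE$\ell_1$-RLQ is a relaxation, i.e., its optimal value lower-bounds that of E$\ell_1$-RLQ. Because every constraint except the last is identical across the two formulations, the whole argument reduces to comparing the nonconvex constraint \eqref{nonlinear-1}, namely $Y \preceq (1+\delta)P^TP$, against the nuclear-norm constraint \eqref{ineq-1} in the presence of the shared semidefinite constraint on the block matrix whose $(2,2)$-block is $\frac{1}{1+\delta}I$.

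The first key step is to apply the Schur complement to that shared block constraint. Assuming $\delta > 0$ so that the block $\frac{1}{1+\delta}I$ is positive definite, positive semidefiniteness of $\begin{bmatrix} Y & P^T \\ P & \frac{1}{1+\delta}I \end{bmatrix}$ is equivalent to $Y - (1+\delta)P^TP \succeq 0$. Hence, on the common feasible set, the residual matrix $M := Y - (1+\delta)P^TP$ always lies in the positive semidefinite cone. The second key step exploits the behavior of the nuclear norm on that cone: since $M \succeq 0$, its singular values coincide with its nonnegative eigenvalues, so $\|M\|_{*} = \mathbf{Tr}(M)$. Thus, restricted to the common feasible set, constraint \eqref{ineq-1} is exactly $\mathbf{Tr}\big(Y - (1+\delta)P^TP\big) \le \epsilon$.

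With these two observations the containment is immediate. Take any point feasible for E$\ell_1$-RLQ: constraint \eqref{nonlinear-1} gives $M \preceq 0$, which together with $M \succeq 0$ forces $M = 0$. Consequently $\|M\|_{*} = 0 \le \epsilon$, and since all remaining constraints agree, the point is feasible for RE$\ell_1$-RLQ. This proves the feasible-set containment and hence the relaxation claim.

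I do not expect a serious obstacle, as the argument is essentially routine; the only points requiring care are (i) confirming that the Schur-complement inequality points in the direction $Y \succeq (1+\delta)P^TP$, so that the residual lands in the PSD cone and the nuclear norm collapses to the trace, and (ii) observing that the relaxation is genuine precisely when $\epsilon > 0$. Indeed, setting $\epsilon = 0$ together with $M \succeq 0$ forces a positive semidefinite matrix of zero trace, which must vanish, thereby recovering the equality $Y = (1+\delta)P^TP$ and collapsing RE$\ell_1$-RLQ back to E$\ell_1$-RLQ. It is this controlled slackening of the hidden equality constraint that makes RE$\ell_1$-RLQ a tunable relaxation, with $\epsilon$ governing how far the feasible set is enlarged.
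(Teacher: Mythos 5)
Your proof is correct and follows essentially the same route as the paper's: both establish feasible-set containment by observing that any E$\ell_1$-RLQ-feasible point satisfies the equality $Y=(1+\delta)P^TP$ (the shared LMI gives $Y \succeq (1+\delta)P^TP$ via Schur complement, while \eqref{nonlinear-1} gives the reverse), so the residual vanishes and \eqref{ineq-1} holds trivially. Your version is in fact slightly more explicit than the paper's, which asserts the equality without spelling out the Schur-complement step.
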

\begin{proof}
Our goal is to prove that RE$\ell_1$-RLQ problem is a relaxation of E$\ell_1$-RLQ problem. It suffices to show that the feasible set of RE$\ell_1$-RLQ problem, which is represented by $\mathcal{F}_1$, contains the feasible set of E$\ell_1$-RLQ problem, which is represented  by $\mathcal{F}_2$. Moreover, suppose that $\mathcal{F}_3$ denotes the set of all feasible points specified by all constraints  of RE$\ell_1$-RLQ problem except constraint \eqref{ineq-1} and $\mathcal{F}_4$ denotes the set of all feasible points determined only by constraint \eqref{ineq-1}. It is straightforward to verify that $\mathcal{F}_2 \subseteq \mathcal{F}_3$. This is because the set of constraints specifying $\mathcal{F}_3$ is a subset of set of constraints specifying $\mathcal{F}_2$. This implies that for a feasible point $(X_1,K_1,F_1,P_1,Y_1) \in \mathcal{F}_2$, it follows that  $(X_1,K_1,F_1,P_1,Y_1) \in \mathcal{F}_3$. 

Since $(X_1,K_1,F_1,P_1,Y_1) \in \mathcal{F}_2$, $Y_1=(1+\delta)P_1^TP_1$ is satisfied. Therefore, $\|Y_1-(1+\delta)P_1^TP_1\|_{*}=0$ is also satisfied and consequently $\|Y_1-(1+\delta)P_1^TP_1\|_{*}=0 \le \epsilon$ is also true. Thus, $(X_1,K_1,F_1,P_1,Y_1) \in \mathcal{F}_4$. Considering the fact that $\mathcal{F}_3 \cap \mathcal{F}_4 = \mathcal{F}_1$ is satisfied, it implies that $(X_1,K_1,F_1,P_1,Y_1) \in \mathcal{F}_1$. Thus, it proves our claim that $\mathcal{F}_1 \supseteq \mathcal{F}_2$.
\end{proof}
\begin{remark}
It should be noted that there is no much difference between choice of $\|.\|_*$ and $\|.\|$ in \eqref{ineq-1}. It just affects the choice of parameter $\epsilon$.
\end{remark}
In the following step, we employ a real analysis idea \cite{royden1988real} to convexify the right hand side of  inequality \eqref{ineq-1} in RE$\ell_1$-RLQ problem, which is the only nonconvex term appearing in RE$\ell_1$-RLQ problem. By performing some matrix calculus, we calculate the convex affine approximation of $M=(1+\delta) P^T P$  as follows
\begin{equation*}
N =(1+\delta) \big(P^T\bar{P} + \bar{P}^T(P-\bar{P})\big),
\end{equation*}
where $\bar{P}$ is an estimate of $P$. We refer to  \cite{petersen2008matrix} for more details on affine approximations of functions of matrices.

So, we substitute the nonconvex condition $\|Y-M\|_{*} \le \epsilon$ with convex condition $\|Y-N\| \le \frac{\epsilon}{n}$ where $\|.\|$ denotes operator norm which is defined as largest singular value of $(.)$. Finally, using the proof of proposition $2$.$1$. in \cite{recht2010guaranteed}, we get the following proposition:
\begin{proposition} \label{propresrel}
The RE$\ell_1$-RLQ problem can be restricted as the following SDP form:
\rule{8.65cm}{0.8pt}
{\bf SDP Restriction of the RE$\bf \ell_1$-RLQ Problem (SDP-R-RE$\bf \ell_1$-RLQ):}
\begin{eqnarray}
& & \hspace{-1.2cm} \underset{X,K,F,P,Y}{\textrm{Minimize}}~ \mathbf{Tr}(X)+\alpha_1 \|K\|_{\ell_1} \nonumber \\
& & \hspace{-1.2cm} \textrm{subject to:  } \nonumber \\
& & \hspace{-0.2cm} \begin{bmatrix} -Q-F+Y & 2X-P^T & P^T\\ 2X-P & I & 0\\P & 0 & \frac{1}{\delta} I\end{bmatrix} \succeq 0, \nonumber \\
& & \hspace{-0.2cm} P = X-\frac{A+BK}{2}, \nonumber\\
& & \hspace{-0.2cm} X \succeq \epsilon_1 I, \nonumber \\
& & \hspace{-0.2cm} \begin{bmatrix} Y & P^T \\ P & \frac{1}{\delta+1}I\end{bmatrix} \succeq 0, \nonumber \\
& & \hspace{-0.2cm} \begin{bmatrix} F & K^T\\ K & R^{-1}\end{bmatrix} \succeq 0, \nonumber \\
& & \hspace{-0.2cm} \begin{bmatrix} \frac{\epsilon}{n} I & Y-N\\ Y-N& \frac{\epsilon}{n} I\end{bmatrix} \succeq 0. \nonumber
\end{eqnarray}
\rule{8.65cm}{0.8pt}
\end{proposition}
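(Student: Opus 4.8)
The plan is to show that the single nonconvex constraint $\|Y - N\| \le \frac{\epsilon}{n}$ (where $N$ is the affine approximation of $M = (1+\delta)P^TP$) can be rewritten exactly as the linear matrix inequality appearing in the SDP-R-RE$\ell_1$-RLQ problem, since all other constraints carried over from RE$\ell_1$-RLQ are already affine in the decision variables $(X,K,F,P,Y)$ once $M$ has been replaced by $N$. The key observation is that $N = (1+\delta)\big(P^T\bar{P} + \bar{P}^T(P-\bar{P})\big)$ is affine in $P$ (with $\bar{P}$ a fixed estimate), so $Y - N$ is affine in the decision variables, and the entire difficulty reduces to expressing an operator-norm bound on the symmetric matrix $Y - N$ as a semidefinite condition.

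First I would invoke the standard result that for any matrix $Z$ and scalar $t > 0$, the operator-norm bound $\|Z\| \le t$ is equivalent to the linear matrix inequality $\begin{bmatrix} tI & Z \\ Z^T & tI \end{bmatrix} \succeq 0$; this is precisely the content of the proof of Proposition 2.1 in \cite{recht2010guaranteed}, which I am permitted to assume. Applying this with $Z = Y - N$ and $t = \frac{\epsilon}{n}$, and noting that $Y - N$ is symmetric (so $Z^T = Z$), yields exactly the final block constraint $\begin{bmatrix} \frac{\epsilon}{n} I & Y-N \\ Y-N & \frac{\epsilon}{n} I \end{bmatrix} \succeq 0$. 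Because the equivalence is two-sided, a point satisfies the operator-norm constraint if and only if it satisfies this LMI, so the feasible sets coincide on this constraint while the remaining constraints are identical.

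It then remains to justify that replacing $\|Y - M\|_* \le \epsilon$ by $\|Y - N\| \le \frac{\epsilon}{n}$ yields a genuine \emph{restriction} rather than an equivalence — that is, every feasible point of SDP-R-RE$\ell_1$-RLQ is feasible for RE$\ell_1$-RLQ. This follows from two facts: the affine approximation $N$ underestimates the convex matrix function $M$ in the Loewner order (so that $M \succeq N$, i.e.\ the tangent plane lies below the graph of a convex function), and the norm inequality $\|\cdot\|_* \le n\|\cdot\|$ relating the nuclear and operator norms on $n \times n$ matrices, which controls the passage from the $\frac{\epsilon}{n}$ operator-norm ball back to the $\epsilon$ nuclear-norm ball. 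Chaining these, a point satisfying $\|Y - N\| \le \frac{\epsilon}{n}$ lies inside the original nonconvex feasible region, establishing the restriction.

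The main obstacle I anticipate is the last paragraph rather than the LMI reformulation: one must argue carefully that the substitution $M \mapsto N$ tightens the constraint in the correct direction, using convexity of $P \mapsto (1+\delta)P^TP$ to place the affine surrogate below $M$, and then combine this with the scalar-to-operator norm conversion so that the two error budgets ($\epsilon$ versus $\frac{\epsilon}{n}$) are compatible. The LMI step is essentially a direct citation of \cite{recht2010guaranteed} and presents no difficulty.
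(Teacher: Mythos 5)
Your overall skeleton matches the paper's: convert the operator-norm bound $\|Y-N\|\le \frac{\epsilon}{n}$ into the final LMI via Proposition 2.1 of \cite{recht2010guaranteed}, then argue that any point satisfying this bound together with the remaining constraints also satisfies $\|Y-M\|_{*}\le\epsilon$, so the SDP is a genuine restriction of RE$\ell_1$-RLQ. The LMI step is fine. The gap is in your second step: the two facts you propose to chain --- $M\succeq N$ and $\|\cdot\|_{*}\le n\|\cdot\|$ --- are not sufficient to conclude $\|Y-M\|_{*}\le\epsilon$. They only give $\|Y-N\|_{*}\le n\|Y-N\|\le\epsilon$; the Loewner inequality $M\succeq N$ by itself says nothing about how $\|Y-M\|_{*}$ compares to $\|Y-N\|_{*}$. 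A scalar caricature shows the chaining fails as stated: take $Y=N=0$, $M=5$, $\epsilon=1$; then both of your facts hold, yet $\|Y-M\|_{*}=5>\epsilon$. Tangency below $M$ actually pushes $N$ \emph{toward} $Y$ only when $Y$ dominates $M$, which is precisely what your argument never invokes.

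The missing ingredient --- which the paper's proof uses and which makes the comparison go in the right direction --- is the constraint $\begin{bmatrix} Y & P^T\\ P & \frac{1}{\delta+1}I\end{bmatrix}\succeq 0$, equivalent by Schur complement to $Y\succeq (1+\delta)P^TP=M$. Combined with $M-N=(\delta+1)(P-\bar{P})^T(P-\bar{P})\succeq 0$ (your tangent-underestimate observation, verifiable by direct computation), this gives $0\preceq Y-M\preceq Y-N$. Since $Y-M$ and $Y-N$ are then both symmetric positive semidefinite, their nuclear norms equal their traces, and trace monotonicity on the positive semidefinite cone yields $\|Y-M\|_{*}=\mathbf{Tr}(Y-M)\le\mathbf{Tr}(Y-N)=\|Y-N\|_{*}\le\epsilon$, placing the point in the feasible set of RE$\ell_1$-RLQ. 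With this step inserted your argument closes and essentially coincides with the paper's proof; without it, the passage from the operator-norm ball around $N$ back to the nuclear-norm ball around $M$ is unsupported.
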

\begin{proof}
Let us suppose that $\mathcal{F}_5$ denotes the set specified by constraint $\|Y-N\| \le \frac{\epsilon}{n}$ and $\mathcal{F}_6=\mathcal{F}_3 \cap \mathcal{F}_5$.\\
So, we should prove that $\mathcal{F}_6 \subseteq \mathcal{F}_1$. Suppose that $(X_2,K_2,F_2,P_2,Y_2)$ be a feasible point in $\mathcal{F}_6$. Hence, we have $(X_2,K_2,F_2,P_2,Y_2) \in \mathcal{F}_3$ and $(X_2,K_2,F_2,P_2,Y_2) \in \mathcal{F}_5$. Using the fact indicated in inequality (2.1) in \cite{recht2010guaranteed}, we have $\|Y-N\|_{*} \le n\|Y-N\| \le \epsilon$. Also, it is easy to check that $M-N=(\delta+1)(P-\bar{P})^T(P-\bar{P})$ is satisfied. Thus, together with $Y \succeq M$, it implies that $Y \succeq M \succeq N$. Now, $Y-M \preceq Y-N$ and $(Y-N)-(Y-M)=M-N \succeq 0$ implies that $\mathbf{Tr}(Y-N)-\mathbf{Tr}(Y-M) \ge 0$. Since $Y-M$ and $Y-N$ are both symmetric matrices, their traces are same as their nuclear norms. Hence, we will have $\|Y-M\|_{*} \le \|Y-N\|_{*}$. Thus, $\|Y-M\|_{*} \le \epsilon$ and subsequently $(X_2,K_2,F_2,P_2,Y_2) \in \mathcal{F}_4$ are resulted. Consequently, $(X_2,K_2,F_2,P_2,Y_2) \in \mathcal{F}_4 \cap \mathcal{F}_3=\mathcal{F}_1$. Subsequently, $\mathcal{F}_6 \subseteq \mathcal{F}_1$ is satisfied and proof is done.
\end{proof}

\begin{remark}
In the following, we try to explain the existing technical issue with the method proposed by \cite{fardad2014design}. In problem (RLX) in \cite{fardad2014design}, very briefly, the authors have considered the $Y-N$ as difference of two positive semi-definite matrices $Z_{+}$ and $Z_{-}$ and forced trace of $Z_{+}$ to zero by minimizing some multiplier of it in objective function, then they have made a conclusion that $Y-N$ becomes negative semi-definite and this together with $Y \succeq X^{-1}$ forces $Y=X^{-1}$. We know that $Y \succeq N$,thus $Y-N$ cannot be negative semi-definite. We are just able to minimize $\|Y-N\|$. The point is that when we force trace of $Z_{+}$ to zero, it cannot be necessarily implied that $Z_{+}-Z_{-}$ tends to $-Z_{-}$ and so it is negative semi-definite. Because, $Z_{-}$ is an optimization variable and its trace can be either tend to zero and if the rate of tending be greater than of $Z_{+}$ one, $Z_{+}-Z_{-}$ cannot be negative semi-definite. Also, if we run algorithm proposed by \cite{fardad2014design}, we will see that not only $Z_{+}-Z_{-}$ is not negative semi-definite, but also it is positive semi-definite which is previously theoretically guaranteed.Thus, we minimize $\|Y-N\|$ instead of using such a method presented in \cite{fardad2014design}.
\end{remark}

\begin{remark}
The other point is that we have characterized the approximated constraint via some parameter called $\epsilon$ in feasible set. However, authors in \cite{fardad2014design}, has done such a goal by considering some parameter called $\alpha_2$ in objective function. Although, these two approaches are equivalent by ignoring the issue arisen in $Z_{+}-Z_{-}$, our proposed method gives much better insight. Because, in proposition \ref{propresrel}, we show how $\|Y-N\|$ and $\|Y-M\|$ are related to each other.
\end{remark}

We consider the SDP-R-RE$\ell_1$-RLQ problem and solve it iteratively by using CVX toolbox. The procedure is as follows:

Firstly, we start to solve the SDP-R-RE$\ell_1$-RLQ problem for initial guess $P^0$. Then, at each step we update the previous values of $\bar{P}$ by its current computed value and continue this process until we reach to the desired level of accuracy. It should be mentioned that the parameter $\epsilon$ is firstly chosen as a sufficiently large number to find a primitive solution and then it decreases in a geometric progression form in order to achieve the sub-optimal solution. In section \ref{IterAlg}, we propose and discuss such an algorithm in detail.
\section{An Iterative Algorithm to Solve for Sub-optimal Sparse Solutions}\label{IterAlg}
In order to compute a near-optimal sparse feedback controller, we propose an iterative algorithm which works based on solving a sequence of SDP forms \cite{boyd2009convex}. The abstract notion of our proposed algorithm is as follows:\\ 
\rule{8.65cm}{0.8pt}\\
\begin{enumerate}
\item Given $\alpha >0$, $\beta < 1$, $\epsilon_2 >0$, $\bar{P}:=P^0$.
\item For i=1,2,... do
\item Set $\epsilon:= n \alpha \beta^{i-1}$
\item Solve SDP-R-RE$\ell_1$-RLQ problem to find optimal $P^*$.
\item If $\frac{\|P^*-\bar{P}\|_{F}}{\|P^*\|_{F}} \ge \epsilon_2$ or $\frac{\|Y-(\delta+1){{P^*}^T P^*\|_F}}{\|Y\|_F} \ge \epsilon_2$ then, set $\bar{P}$ equal to optimal $P^*$, else, set $P^{opt}=P^*$ break \label{item5}
\item End for.
\end{enumerate}
\rule{8.65cm}{0.8pt}
where $\|.\|_{F}$ denotes the Frobenius norm of $(.)$ which is defined as $\|.\|_F=\sqrt{\mathbf{Tr}((.)^T(.))}$.

Our selection of Frobenius norm doesn't provide any major advantage and it can be replaced by operator norm or nuclear norm \cite{horn2012matrix}. It should be considered that $P^0$ is initiated as $X^0-\frac{A+BK^0}{2}$ where $(K^0)$ is the solution of standard LQR problem and $X^0$ is equal to $\int_0^\infty e^{(A+BK^0)^Tt}(Q+{K^0}^TRK^0)e^{(A+BK^0)t}dt$ which can be computed via solving a Lyapunov equation. The inequality appeared on item \ref{item5} of the proposed algorithm are considered in a normalized form to reach higher accuracy.

As we will see in section \ref{NumExp}, our numerous experiments show that our proposed algorithm converges and suggests a sub-optimal sparse solution for $\ell_1$-RLQ problem, however, we have no proof on its convergence.

\section{Numerical Experiments}\label{NumExp}
\subsection{Spatially Decaying Systems}
Here we give an example of spatially decaying systems which have been considered in \cite{bamieh2002distributed,motee2008optimal,motee2013measuring,motee2014sparsity}. In such systems the absolute value of entries of $A$-matrix decreases spatially, i.e. when we start from an entry on main diagonal and get away from it, the absolute value of entries falls off.

As a sub-class of such spatially decaying systems, let us consider the sub-exponentially decaying systems. The ${ij}^{th}$ element of a random sub-exponentially decaying system $A$ is defined by $A_{ij}=c_ie^{-\alpha_A |i-j|^{\beta_A}}$ where $c_i$ is the real random number corresponded to the $i^{th}$ row and generated by $randn$ command of MATLAB, $\alpha_A$ is a positive real number and $\beta_A$ is a real number between 0 and 1 \cite{guide1998mathworks}. Note that $\alpha_A$ determines the width of the band of the matrix $A$ and $\beta_A$ specifies the rate of decaying in sub-exponentially decaying system modeled by $A$. For large values of $\alpha_A$ we get narrow band width and for large values of $\beta$ we get high rate of spatially decaying. Since we have $A_{ii}=c_i$, $c_i$ appears on $i^{th}$ entry of main diagonal of matrix $A$. By taking $\alpha_A= 5$, $\beta_A=0.5077$ and some randomly generated $c_{i}$'s we get the following sub-exponentially decaying $A$-matrix:\\
\[ A = \footnotesize {\begin{bmatrix} $0.5377$  & $0.0036$ &  $0.0004$  & $0.0001$ &  $0.0000$  & $0.0000$\\
 $0.0124$ &  $1.8339$  & $0.0124$  & $0.0015$  & $0.0003$ &  $0.0001$\\
    $-0.0018$  &  $-0.0152$ &   $-2.2588$  &  $-0.0152$  &  $-0.0018$ &   $-0.0004$\\
    $0.0001$ &  $0.0007$  & $0.0058$ &  $0.8622$ &  $0.0058$ &  $0.0007$\\
    $0.0000$ &   $0.0001$ &   $0.0003$  &   $0.0021$  &   $0.3188$ &   $0.0021$\\
   $-0.0000$ &  $-0.0001$ &  $-0.0002$ &  $-0.0011$  & $-0.0088$ &  $-1.3077$ \end{bmatrix}}\]
The set of eigenvalues of $A$ is $\{-2.2588, 1.8339, -1.3077, 0.5376, 0.8622, 0.3187\}$. Thus, $A$ is an unstable system.
Assuming the $\alpha_1=0.005$, $\alpha = 0.00001$, $\beta = 0.99$, $\delta=0.001$, $\epsilon_1=0.000001$, $\epsilon_2=0.00005$ and $B=Q=R=I$, Algorithm in section IV proposes the following sub-optimal sparse state feedback gain controller:\\
    \[ K^{opt} = \footnotesize \begin{bmatrix} $-1.6675$ & $-0.0066$ &  $0.0000$ &  $0.0000$ &   $0.0000$ &  $0.0000$\\
   $-0.0113$ &  $-3.9123$  & $0.0000$  & $0.0000$ &  $0.0000$ &  $0.0000$\\
   $0.0000$  &  $0.0000$  & $-0.1992$ &  $0.0000$  &  $0.0000$ &  $0.0000$ \\
   $0.0000$  &  $0.0000$ &  $0.0000$ &  $-2.1760$  &  $-0.0014$ &  $0.0000$\\
    $0.0000$  & $0.0000$  & $0.0000$ &  $-0.0001$ &  $-1.3632$ &   $0.0000$\\
   $0.0000$  &  $0.0000$  & $0.0000$  & $0.0000$ &  $0.0000$ &  $-0.3304$ \end{bmatrix}\]
The set of eigenvalues of $A+BK^{opt}$ consists of $-2.4575$, $-2.0789$, $-1.1298$, $-1.0444$, $-1.3139$ and $-1.6380$. Thus, the proposed sub-optimal sparse feedback gain controller is stabilizing. Also, the optimal value is $9.6968$. The optimal value of the standard LQR problem is $9.6965$ in this case.

In a similar way, we consider a $16 \times 16$ sub-exponentially decaying system and plot its corresponding matrices $A$ and $K$ in a logarithmic gray-scale spectrum. The corresponding parameters are as follows:\\ $\alpha_1=0.005$, $\alpha = 0.00001$, $\beta = 0.99$, $\alpha_A= 5$, $\beta_A=0.8903$, $\delta=0.001$, $\epsilon_1=0.000001$, $\epsilon_2=0.00005$ and $B=Q=R=I$. The logarithmic gray-scale spectrum of matrices $A$ and $K^{opt}$ has been depicted in Fig. \ref{Fig. 1} and Fig. \ref{Fig. 2} respectively. \\

\begin{figure}[ht]
    \centering
    \includegraphics[scale=0.5]{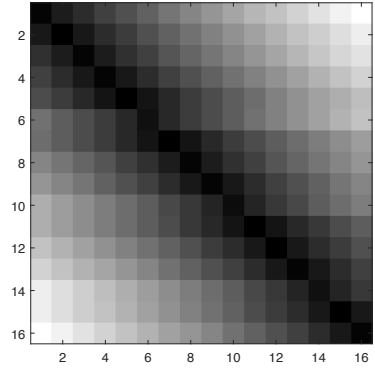}
    \caption{Logarithmic gray-scale spectrum of matrix $A$}
    \label{Fig. 1}
\end{figure}

\begin{figure}[ht]
    \centering
    \includegraphics[scale=0.5]{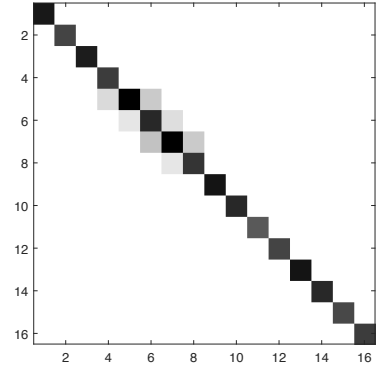} 
    \caption{Logarithmic gray-scale spectrum of matrix $K^{opt}$}
    \label{Fig. 2}
\end{figure}
As it is depicted in Fig. \ref{Fig. 2}, the sub-optimal sparse state feedback gain controller $K^{opt}$ has a sub-exponentially-like structure. In \cite{motee2008decentralized}, it has been shown that if matrices $A$, $B$, $Q$ and $R$ belong to an operator algebra structure, the corresponding standard LQR problem solution inherits such a structure. So, here we see some similar intuitive observation which illustrates such a fact.
\subsection{Cyclic Systems}
Here, we consider cyclic systems to investigate our proposed algorithm in section IV. Cyclic systems arise in various applications such as biochemical reactions which have been widely studied from mathematical point of view in different works such as \cite{tyson1978dynamics,thron1991secant,jovanovic2007remarks,jovanovic2008passivity,siami2015scaling,siami2013robustness}. The $A$-matrix of a cyclic system is specified satisfying the following properties:
\begin{itemize}
\item $A_{ii}$ is negative for all $i \in \{1,...,n\}$.
\item $A_{i (i-1)}$ is positive for all $i \in \{2,...,n\}$.
\item $A_{1 n}$ is negative.
\end{itemize}
Let us assume that $A$ is a $10 \times 10$ random cyclic matrix generated by MATLAB. Then, the sparsity pattern of sparse state feedback gain controller for such a system with following parameters: $\alpha_1=0.5$, $\alpha = 0.00001$, $\beta = 0.99$, $\delta=0.001$, $\epsilon_1=0.000001$, $\epsilon_2=0.00005$, $B=Q=R=I$ has been depicted in Fig. \ref{Fig. 3}.
\begin{figure}[ht]
    \centering
    \includegraphics[scale=0.5]{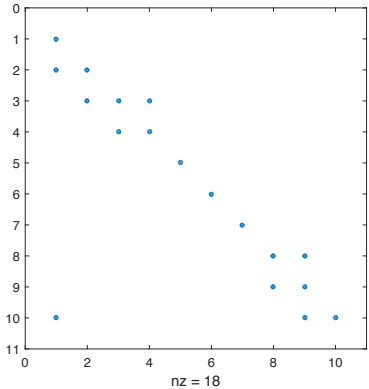}
    \caption{Sparsity pattern of matrix $K^{opt}$ for $\alpha_1=0.5$}
    \label{Fig. 3}
\end{figure}
As it is pointed in Fig. \ref{Fig. 3} the number of nonzero terms of the proposed sub-optimal state feedback gain controller is equal to $18$ i.e. $82 \%$ of entries of the proposed sub-optimal state feedback gain controller $K^{opt}$ is equal to $0$. Thus, it is sparse. Also, the optimal value of the standard LQR  and optimal value of our proposed sub-optimal state feedback gain controller are $16.3760$ and $17.8602$ respectively. So, for $\alpha_1=0.5$, if the quadratic cost gets increased by $9.06 \%$, the sparsity is improved by $82 \%$.

Table \ref{Tab1} demonstrates the effect of penalizing parameter $\alpha_1$ on optimal value $J^{opt}$ and cardinality of sub-optimal controller $\|K^{opt}\|_{\ell_0}$ in our proposed method (i.e. CVX) and the method proposed by \cite{lin2013design} (i.e. ADMM).
\begin{table}[ht!]
\centering
\caption{quadratic cost and cardinality vs penalizing parameter for our proposed method and the method proposed by \cite{lin2013design} for a $10 \times 10$ random cyclic matrix.}
\label{Tab1}
\begin{tabular}{ |p{1.2cm}|p{1cm}|p{1.2cm}|p{1cm}|p{1.2cm}|  } 
\hline
\multicolumn{5}{|c|}{$J^{LQR}=16.3760$ and $\|K^{LQR}\|_{\ell_0}=100$}\\
\hline
log scale & CVX & CVX & ADMM & ADMM\\
\hline
$\alpha_1$ & $J^{opt}$ & $\|K^{opt}\|_{\ell_0}$ & $J^{opt}$ & $\|K^{opt}\|_{\ell_0}$\\
\hline
$0.001$ & $16.3761$ & $61$ & $16.3760$ & $94$\\
$0.002$ & $16.3762$ & $56$ & $16.3760$ & $64$\\
$0.003$ & $16.3765$ & $55$ & $16.3761$ & $58$\\
$0.004$ & $16.3768$ & $55$ & $16.3763$ & $57$\\
$0.005$ & $16.3773$ & $54$ & $16.3765$ & $55$\\
$0.006$ & $16.3778$ & $52$ & $16.3768$ & $54$\\
$0.007$ & $16.3784$ & $48$ & $16.3772$ & $51$\\
$0.008$ & $16.3790$ & $47$ & $16.3777$ & $48$\\
$0.009$ & $16.3797$ & $47$ & $16.3782$ & $48$\\
$0.01$ & $16.3805$ & $45$ & $16.3789$ & $46$\\
$0.02$ & $16.3914$ & $40$ & $16.3875$ & $40$\\
$0.03$ & $16.4070$ & $38$ & $16.4020$ & $39$\\
$0.04$ & $16.4260$ & $37$ & $16.4198$ & $38$\\
$0.05$ & $16.4451$ & $35$ & $16.4389$ & $35$\\
$0.06$ & $16.4663$ & $32$ & $16.4590$ & $32$\\
$0.07$ & $16.4896$ & $31$ & $16.4812$ & $31$\\
$0.08$ & $16.5152$ & $30$ & $16.5057$ & $30$\\
$0.09$ & $16.5422$ & $28$ & $16.5318$ & $28$\\
$0.1$ & $16.5677$ & $27$ & $16.5597$ & $28$\\
$0.2$ & $16.8308$ & $22$ & $16.7944$ & $22$\\
$0.3$ & $17.1527$ & $20$ & $17.1126$ & $21$\\
$0.4$ & $17.4952$ & $19$ & $17.4584$ & $19$\\
$0.5$ & $17.8602$ & $18$ & $17.8118$ & $18$\\
$0.6$ & $18.2200$ & $15$ & $18.1611$ & $15$\\
$0.7$ & $18.5817$ & $15$ & $18.5123$ & $15$\\
$0.8$ & $18.9483$ & $15$ & $18.8688$ & $15$\\
$0.9$ & $19.3169$ & $15$ & $19.2274$ & $15$\\
$1$ & $19.6853$ & $14$ & $19.5860$ & $14$\\
$2$ & $23.1799$ & $13$ & $22.9991$ & $13$\\
$3$ & $25.5533$ & $13$ & $26.0798$ & $13$\\
$4$ & $26.9975$ & $12$ & $28.8795$ & $12$\\
$5$ & $28.6990$ & $11$ & $31.3772$ & $10$\\
$6$ & $30.1690$ & $11$ & $33.6764$ & $10$\\
$7$ & $31.5150$ & $10$ & $35.8389$ & $10$\\
$8$ & $32.9970$ & $10$ & $37.8935$ & $10$\\
$9$ & $34.4342$ & $10$ & $39.8488$ & $10$\\
$10$ & $35.2497$ & $10$ & $41.7020$ & $10$\\
\hline
\end{tabular}
\\
\end{table}
Based on data set shown in Table \ref{Tab1}, Fig. \ref{Fig. 4} depicts how penalizing parameter, sparsity level and performance degradation relate to each other. As it is seen intuitively, Fig. \ref{Fig. 4} confirms this fact that as penalizing parameter $\alpha_1$ increases, the sparsity level decreases, however, performance degradation increases. Thus, there is a trade-off between sparsity level and performance degradation. As an instance for $\alpha_1=0.1$, by increasing $1.17 \%$ and $1.12 \%$ in quadratic cost for CVX and ADMM, respectively, $73 \%$ and $72 \%$ of sub-optimal sparse state feedback gain controller entries get equal to $0$, respectively.

\begin{figure}[ht]
    \centering
    \includegraphics[scale=0.55]{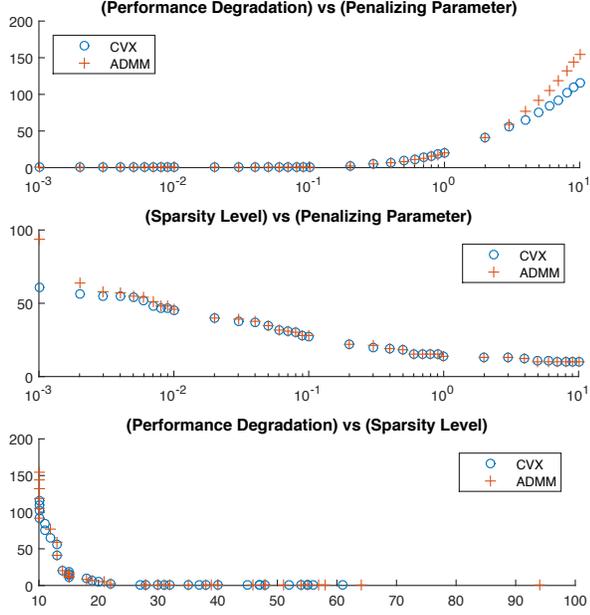} 
    \caption{The Performance Degradation $\frac{J^{opt}-J^{LQR}}{J^{LQR}} \times 100 ~ \%$ vs Penalizing Parameter $\alpha_1$, The Sparsity Level $\frac{\|K^{opt}\|_{\ell_0}}{\|K^{LQR}\|_{\ell_0}} \times 100 ~ \%$ vs Penalizing Parameter $\alpha_1$ and The Performance Degradation $\frac{J^{opt}-J^{LQR}}{J^{LQR}} \times 100 ~ \%$ vs Sparsity Level $\frac{\|K^{opt}\|_{\ell_0}}{\|K^{LQR}\|_{\ell_0}} \times 100 ~ \%$ corresponded to data set shown in Table \ref{Tab1} for both CVX and ADMM methods.}
    \label{Fig. 4}
\end{figure}
According to the secant condition stated in \cite{thron1991secant} for the stability of the linear cyclic systems, we have the following sufficient condition:
\begin{equation*}
\frac{(\prod_{i=2}^{n}(A^c_{i(i-1)}))(-A^c_{1n})}{\prod_{i=1}^{n}(-A^c_{ii})} < (\sec(\frac{\pi}{n}))^n,
\end{equation*}
where by superscript $^c$, we mean the closed loop.\\
By doing some basic math, it yields that we have:
\begin{enumerate}
\item For $K_{jj}$ where $j \in \{1,...,n\}$, we get the following condition:\\
\begin{equation*}
K_{jj} < -\frac{(\cos(\frac{\pi}{n}))^n(\prod_{i=2}^{n}(A_{i(i-1)}))(-A_{1n})}{\prod_{i=1,i \neq j}^{n}(-A_{ii})}-A_{jj}.
\end{equation*}
\item For $K_{j(j-1)}$ where $j \in \{2,...,n\}$, we get the following condition:
\begin{equation*}
-A_{j(j-1)} < K_{j(j-1)},
\end{equation*}
\begin{equation*}
K_{j(j-1)} < \frac{(\sec(\frac{\pi}{n}))^n\prod_{i=1}^{n}(-A_{ii})}{(\prod_{i=2, i \neq j}^{n}(A_{i(i-1)}))(-A_{1n})}-A_{j(j-1)}.
\end{equation*}
\item For $K_{1n}$, we get the following condition:
\begin{equation*}
-A_{1n} > K_{1n},
\end{equation*}
\begin{equation*}
K_{1n} > -\frac{(\sec(\frac{\pi}{n}))^n\prod_{i=1}^{n}(-A_{ii})}{(\prod_{i=2}^{n}(A_{i(i-1)}))}-A_{1n}.
\end{equation*}
\end{enumerate}
In our special case, one of the above $2n$ conditions implies that the matrix with one nonzero term $K_{(10)(10)}=-9577.1955$ is one of the sparsest solutions. However, its corresponding quadratic cost is $10182.2224$ which is an extremely large number. In other words, such a solution gives an upper bound for the sparse solution corresponded to the extremely large value of $\alpha_1$ which is equivalent to the minimizing of $\|K\|_{\ell_0}$. It is very interesting that the sparsity level for both CVX and ADMM is equal to $10 \%$, however, it is equal to $1 \%$ for our previously mentioned cyclic system.
\section{Discussion and Conclusion}\label{Discon}
As it is shown, we propose a sub-optimal sparsity-promoting state feedback gain controller on the basis of affine approximation, Schur complement and SDP form. The numerical experiments show that our proposed method gives an appropriate approximation of optimal sparse solution of the $\ell_0$-regularized LQR problem. For $\alpha_1=0$ we get the standard LQR solution and by increasing it, we achieve more sparse state feedback gain controllers and the number of nonzero entries of state feedback gain controller is decreasing. And also, the optimal value is increasing. Thus, there is a fundamental trade-off between sparsity level and performance degradation.
An advantage of our proposed method is that just like algorithm $1$ in \cite{fardad2014design}, it can be solved by using CVX toolbox because of its convex SDP form. Although sparsifying rate of our proposed method for small values of $\alpha_1$ is higher compared to the proposed method in \cite{lin2013design}, the quadratic cost minimizing rate of the latter one is higher than ours. For large values of $\alpha_1$, such a behavior is getting reversed. Our proposed method is slower than the method presented in \cite{lin2013design}, however, it has a simpler form. Again, we insist on this point that the quadratic cost achieved by our proposed method is an upper bound for the exact optimal solution of the Sparsity-promoting LQR problem.

The improvement of the accuracy of our proposed method can be considered as a future work. Also, using the norms other than $\ell_1$-norm or utilizing the weighted version of the $\ell_1$-norm could result in better achievements. We try to use the Affine Approximation method as an appropriate tool in sparsity problems for which the sparsification occurs in time domain.

\appendix
The lemma \ref{prop1} is proved by using the following equivalent problems:\\
\begin{eqnarray}
& & \hspace{-0.6cm} \underset{X,K}{\textrm{Minimize}} ~\mathbf{Tr}(X) +\alpha_1 \|K\|_{\ell_1} \nonumber\\
& & \hspace{-0.6cm} \textrm{subject to:  } \nonumber\\
& & (A+BK)^TX+X(A+BK)+Q+K^TRK = 0,
\nonumber\\
& & \hspace{0.0cm} X \succ 0. \label{part1} 
\end{eqnarray}
\begin{eqnarray}
& & \hspace{-0.6cm} \underset{X,K}{\textrm{Minimize}} ~\mathbf{Tr}(X) +\alpha_1 \|K\|_{\ell_1} \nonumber\\
& & \hspace{-0.6cm} \textrm{subject to:  } \nonumber\\
& & (A+BK)^TX+X(A+BK)+Q+K^TRK \preceq 0,
\nonumber\\
& & \hspace{0.0cm} X \succ 0. \label{part2}
\end{eqnarray}
\begin{eqnarray}
& & \hspace{-1.05cm} \underset{X,K,F}{\textrm{Minimize}} ~\mathbf{Tr}(X) +\alpha_1 \|K\|_{\ell_1} \nonumber\\
& & \hspace{-1.05cm} \textrm{subject to:  } \nonumber\\
& & (A+BK)^TX+X(A+BK)+Q+F \preceq 0, \nonumber
\\
& & \hspace{0.0cm} X \succ 0, \nonumber\\ 
& & F \succeq K^TRK. \label{part3}
\end{eqnarray}
\begin{eqnarray}
& & \hspace{-0.45cm} \underset{X,K,F}{\textrm{Minimize}} ~\mathbf{Tr}(X) +\alpha_1 \|K\|_{\ell_1} \nonumber\\
& & \hspace{-0.45cm} \textrm{subject to:  } \nonumber\\
& & \hspace{-0.8cm} -Q-F \succeq \nonumber\\ 
& & \hspace{-0.8cm} \begin{bmatrix} X+\frac{(A+BK)^T}{2} & X-\frac{(A+BK)^T}{2}\end{bmatrix}\begin{bmatrix} I & 0\\ 0 & -I\end{bmatrix}\begin{bmatrix} X+\frac{A+BK}{2}\\ X-\frac{A+BK}{2}\end{bmatrix}, \nonumber
\\
& & \hspace{-0.8cm} X \succ 0, \nonumber\\
& & \hspace{-0.8cm} \begin{bmatrix} F & K^T\\ K & R^{-1}\end{bmatrix} \succeq 0. \label{part4}
\end{eqnarray}
\begin{eqnarray}
& & \hspace{-0.8cm} \underset{X,K,F,P}{\textrm{Minimize}} ~\mathbf{Tr}(X) +\alpha_1 \|K\|_{\ell_1} \nonumber\\
& & \hspace{-0.8cm} \textrm{subject to:  } \nonumber\\
& & \hspace{-0.8cm} -Q-F+
\begin{bmatrix} 2X-P^T & P^T \end{bmatrix}\begin{bmatrix} 0 & 0\\ 0 & (\delta+1)I\end{bmatrix}\begin{bmatrix} 2X-P^T\\ P\end{bmatrix} \nonumber\\
& & \hspace{-0.8cm} \succeq \nonumber\\
& & \hspace{-0.8cm} \begin{bmatrix} 2X-P^T & P^T \end{bmatrix}\begin{bmatrix} I & 0\\ 0 & \delta I\end{bmatrix}\begin{bmatrix} 2X-P\\ P\end{bmatrix},
\nonumber\\
& & \hspace{-0.8cm} P = X-\frac{A+BK}{2}, \nonumber\\
& & \hspace{-0.8cm} X \succ 0, \nonumber\\
& & \hspace{-0.8cm} \begin{bmatrix} F & K^T\\ K & R^{-1}\end{bmatrix} \succeq 0. \label{part5}
\end{eqnarray}
\begin{eqnarray}
& & \hspace{-3.2cm} \underset{X,K,F,P,Y}{\textrm{Minimize}} ~\mathbf{Tr}(X) +\alpha_1 \|K\|_{\ell_1} \nonumber\\
& & \hspace{-3.2cm} \textrm{subject to:  } \nonumber\\
& & \hspace{-3.2cm} -Q-F+Y \nonumber\\
& & \hspace{-3.2cm} \succeq \nonumber\\
& & \hspace{-3.2cm} \begin{bmatrix} 2X-P^T & P^T \end{bmatrix}\begin{bmatrix} I & 0\\ 0 & \delta I\end{bmatrix}\begin{bmatrix} 2X-P\\ P\end{bmatrix},
\nonumber\\
& & \hspace{-3.2cm} P = X-\frac{A+BK}{2}, \nonumber\\
& & \hspace{-3.2cm} X \succ 0, \nonumber\\
& & \hspace{-3.2cm} \begin{bmatrix} F & K^T\\ K & R^{-1}\end{bmatrix} \succeq 0, \nonumber\\
& & \hspace{-3.2cm} Y \succeq (\delta+1)P^T P, \nonumber\\
& & \hspace{-3.2cm} Y \preceq (\delta+1)P^T P. \label{part6}
\end{eqnarray}
\begin{eqnarray}
& & \hspace{-3cm} \underset{X,K,F,P,Y}{\textrm{Minimize}} ~\mathbf{Tr}(X) +\alpha_1 \|K\|_{\ell_1} \nonumber\\
& & \hspace{-3cm} \textrm{subject to:  } \nonumber\\
& & \hspace{-3cm} \begin{bmatrix} -Q-F+Y & 2X-P^T & P^T\\ 2X-P & I & 0\\P & 0 & \frac{1}{\delta} I\end{bmatrix} \succeq 0, \nonumber\\
& & \hspace{-3cm} P = X-\frac{A+BK}{2}, \nonumber\\
& & \hspace{-3cm} X \succeq \epsilon_1I, \nonumber\\
& & \hspace{-3cm} \begin{bmatrix} Y & P^T\\P & \frac{1}{\delta+1}I\end{bmatrix} \succeq 0, \nonumber\\
& & \hspace{-3cm} \begin{bmatrix} F & K^T\\ K & R^{-1}\end{bmatrix} \succeq 0, \nonumber\\
& & \hspace{-3cm} Y \preceq (\delta+1)P^TP. \label{part7}
\end{eqnarray}

The proof procedure from problem (\ref{part1}) to problem (\ref{part4}) is same as one which has been stated in \cite{fardad2014design}. We apply Schur Complement to problems (\ref{part3}) and (\ref{part6}) to achieve an SDP form in problems (\ref{part4}) and (\ref{part7}) respectively which makes them applicable to CVX. Although the first Schur complement is applied easily, some operations must be done during the problems (\ref{part4}) to (\ref{part6}) to apply the second Schur complement to part (\ref{part6}) to get problem (\ref{part7}).\\ In problem (\ref{part6}) we substitute $X \succ 0$ by $X \succeq \epsilon_1 I$ to get problem (\ref{part7}) where $\epsilon_1$ is sufficiently small number. We do such a substitution, because, it must have an SDP form in order to be applied to CVX. Also, in problem (\ref{part6}), $Y=(\delta+1)P^TP$ has been bifurcated to $Y \succeq (\delta+1)P^T P$ and $Y \preceq (\delta+1)P^T P$ where the Schur complement is applied to the first one to get the problem (\ref{part7}). We can do such a bifurcation. Because, they are equivalent conditions.
\section*{Acknowledgement}
The Author is thankful to Dr. Sadegh Bolouki for his fruitful comments.

\addtolength{\textheight}{-12cm}   

\begin{spacing}{.8}
\bibliographystyle{IEEEtran}
\bibliography{merg1}
\end{spacing}

\end{document}